\newtheorem{thm}{Theorem}
\newtheorem{prop}[thm]{Proposition}
\newtheorem{lem}[thm]{Lemma}
\newtheorem{sublem}[thm]{Sublemma}
\newtheorem{prob}[thm]{Problem}
\newtheorem{ex}[thm]{Example}
\newtheorem{claim}[thm]{Claim}
\DeclareRobustCommand{\qed}{%
  \ifmmode 
  \else \leavevmode\unskip\penalty9999 \hbox{}\nobreak\hfill
  \fi
  \quad\hbox{\qedsymbol}}
\newcommand{\openbox}{\leavevmode
  \hbox to.77778em{%
  \hfil\vrule
  \vbox to.675em{\hrule width.6em\vfil\hrule}%
  \vrule\hfil}}
\newcommand{\qedsymbol}{\openbox}
\newenvironment{proof}[1][\proofname]{\par
  \normalfont
  \topsep6\p@\@plus6\p@ \trivlist
  \item[\hskip\labelsep\itshape
    #1\@addpunct{.}]\ignorespaces
}{%
  \qed\endtrivlist
}
\newcommand{\proofname}{Proof}
\newcommand{\R}{\mathbb{R}}
\newcommand{\N}{\mathbb{N}}
\newcommand{\supp}{\mathop{\mathrm{supp}}\nolimits}
\renewcommand{\hat}{\widehat}
\renewcommand{\setminus}{\smallsetminus}
\renewcommand{\refname}%
{\begin{center}\normalsize\mdseries\scshape%
{References}\end{center}}
\title{On non-uniformly simple groups}
\author{Hiroki KODAMA}
\date{July 26, 2011}
\begin{document}
\maketitle
\thispagestyle{empty}

\begin{abstract}
Suppose $G$ is a simple group.
For any nontrivial elements $g$ and $h$,
$g$ can be written as a finite product of
conjugates of $h$ or the inverse of $h$.
G is called uniformly simple if the length
of such an expression is uniformly bounded.
We show that the infinite alternating group is
non-uniformly simple and evaluate how 
the length of such an expression is unbounded.
\end{abstract}

For an element $g$ of a group $G$,
we denote by $C_g$ the conjugacy class of $g$,
and define $[g] := C_g \cup C_{g^{-1}}$.
Suppose $G$ is a simple group.
For a nontrivial element $h\in G$,
the normal subgroup generated by $h$,
$$N_h=\{h_1h_2\cdots h_n \,|\, n\in\N,\, h_i \in [h]\}$$
coincides with $G$. Therefore, for any nontrivial elements $g,h\in G$,
$g$ can be written as a 
finite product of elements of $[h]$.
Denote by $\lambda_h (g)$ the minimum number of $n$ 
in such expressions $g=h_1h_2\cdots h_n$.
A simple group $G$ is called \textbf{uniformly simple} if $\lambda_h (g)$
is bounded by a constant independent of $g,h$ and
\textbf{non-uniformly simple} if it is not.

If $h_1,\,h_2 \in [h]$ and $g_1,\,g_2 \in [g]$ 
then $\lambda_{h_1} ({g_1})=\lambda_{h_2} ({g_2})$.
Therefore, we can consider $\lambda$ as a function of $[h]$ and $[g]$
to use an abuse of notation
$$
\lambda_{[h]} ([g])=\lambda_{[h]} (g)=
\lambda_{h} ([g])=\lambda_{h} (g).
$$

\begin{ex}
The group $S_\infty$ of bijections on the set $\N$ of 
natural numbers with finite support is called the 
\textbf{infinite symmetric group}. 
The even permutations of $S_\infty$ form the 
\textbf{infinite alternating group} $A_\infty$, 
which is a non-uniformly simple group.
\end{ex}

For any nontrivial elements $g,h \in G$, we set
$$d([g],[h]):=\log(\max\{\lambda_{[g]} ([h]),\lambda_{[h]} ([g])\}).$$
From an easy inequality 
$\lambda_{[f]} ([h])
\leqq
\lambda_{[f]} ([g])\lambda_{[g]} ([h])$ 
($[f],[g],[h]\in G\setminus\{e\}$) $\cdots$(1),
it follows that
$d$ is a metric on $\hat{G}:=\{[g]\,|\, g\in G\setminus\{e\} \, \}$.
Remark that the followings are equivalent; 
1.~the group $G$ is uniformly simple,
2.~the metric space $(\hat{G},d)$ is quasi-isometric to a point.

What if the group $G$ is non-uniformly simple?
For the case $G=A_\infty$ we show the following theorem.

\begin{thm}\label{th}
the metric space $(\hat{A_\infty},d)$
is quasi-isometric to the half-line $\R_{+}$
\end{thm}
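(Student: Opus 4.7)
My plan is to exhibit a quasi-isometry $\phi:\hat{A_\infty}\to\R_+$.  For nontrivial $g\in A_\infty$ set $\nu([g]):=|\supp(g)|\geq 3$, a conjugacy invariant, and put $\phi([g]):=\log\nu([g])$.  Every integer $n\geq 3$ is realized as some $\nu([g])$, so the image $\{\log n:n\geq 3\}$ is coarsely dense in $\R_+$ (consecutive gaps $\log(n{+}1)-\log n\to 0$), making quasi-surjectivity automatic.  The substance of the theorem is the two-sided estimate
\[
  C_1^{-1}\max\!\left(\tfrac{\nu([g])}{\nu([h])},\tfrac{\nu([h])}{\nu([g])}\right)-C_2 \,\leq\, \max\bigl(\lambda_{[h]}([g]),\lambda_{[g]}([h])\bigr) \,\leq\, C_1\max\!\left(\tfrac{\nu([g])}{\nu([h])},\tfrac{\nu([h])}{\nu([g])}\right)+C_2,
\]
from which $d([g],[h])\asymp|\phi([g])-\phi([h])|$ by taking logarithms.

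The lower bound is immediate from the subadditivity of support: if $g=h_1\cdots h_n$ with $h_i\in[h]$, then $\supp(g)\subseteq\bigcup_i\supp(h_i)$, giving $\nu([g])\leq n\,\nu([h])$ and hence $\lambda_{[h]}([g])\geq\nu([g])/\nu([h])$; the swapped inequality is identical.

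For the upper bound I would prove two complementary lemmas.  A \emph{chopping lemma}: any $g\in A_\infty$ factors as $g=g_1\cdots g_k$ with each $g_i\in A_\infty$ satisfying $\nu(g_i)\leq\nu(h)$ and $k\leq C'\lceil\nu(g)/\nu(h)\rceil$; this follows from the identity $(a_1\,\ldots\,a_M)=(a_1\,\ldots\,a_k)(a_k\,\ldots\,a_M)$ used to split long cycles, plus a parity adjustment using the abundant free points of $A_\infty$.  A \emph{bounded lemma}: there is an absolute constant $K$ such that whenever $\nu(g)\leq\nu(h)$ one has $\lambda_{[h]}([g])\leq K$.  These combine to give $\lambda_{[h]}([g])\leq\sum_i\lambda_{[h]}([g_i])\leq KC'\lceil\nu(g)/\nu(h)\rceil$, completing the upper estimate.

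The main obstacle is the bounded lemma, which must hold uniformly over all cycle types of $h$.  The key tool is the commutator identity $[x,y]=x(yx^{-1}y^{-1})$: since $[h]$ is closed under conjugation and inversion, $x\in[h]$ implies $yx^{-1}y^{-1}\in[h]$, so every such commutator costs only two factors from $[h]$.  It therefore suffices to realize any $g$ with $\nu(g)\leq\nu(h)$ as a bounded product of commutators with first slot in $[h]$, which I would verify by case analysis on the shortest cycle of $h$.  For example, if $h$ contains a transposition, take conjugates $h_1=(1\,2)r'$ and $h_2=r'^{-1}(2\,3)$ of $h$ with the tail $r'$ placed disjointly from $\{1,2,3\}$, so that $h_1 h_2=(1\,2)(2\,3)=(1\,2\,3)$ and both $h_i\in[h]$; if every cycle of $h$ has length $\geq 3$, the identity $(1\,2\,3)(1\,3\,4)=(2\,3\,4)$ and its analogues for longer cycles yield a similar 2-factor expression for an arbitrary 3-cycle.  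The uniform $K$ emerges because $A_\infty$ always provides unlimited room outside $\supp(g)\cup\supp(h)$ to place the cancelling tails of the conjugates of $h$; this infinite-room feature is precisely what separates $A_\infty$ from the finite $A_n$ and what makes the resulting quasi-isometry type $\R_+$ rather than a point.
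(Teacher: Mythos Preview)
Your global strategy mirrors the paper's: pick a conjugacy-invariant size (you use $\nu=|\supp|$, the paper uses the transposition word length $\lambda$; these are comparable since $\nu/2\le\lambda<\nu$), prove a two-sided multiplicative bound $\lambda_h(g)\asymp\text{size}(g)/\text{size}(h)$, and take logarithms. Your lower bound via support subadditivity is correct and is essentially the paper's inequality~(1). The chopping-plus-bounded packaging of the upper bound is a reasonable alternative organization.

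The genuine gap is in your bounded lemma. Your sketch only produces a single $3$-cycle as a product of two elements of $[h]$; it does not explain how to obtain an \emph{arbitrary} even $g$ with $\nu(g)\le\nu(h)$ from boundedly many. Decomposing such a $g$ into $3$-cycles and invoking your $3$-cycle fact costs $\sim\lambda(g)$ factors, which can be as large as $\nu(h)$ and is not uniformly bounded. The commutator observation $[x,y]\in[h]\cdot[h]$ for $x\in[h]$ is correct but does not bound the number of commutators needed, and your proposed case analysis on the shortest cycle of $h$ controls the cycle type of $h$ but not that of $g$, which is where the real difficulty lies. The paper closes exactly this hole by routing through a canonical intermediate cycle type: two conjugates of any $h$ already yield $\iota_{2\ell}=(1\,2)(3\,4)\cdots(4\ell{-}1\ \,4\ell)$ with $\ell\ge\lambda(h)/4$ (Lemma~5), disjoint copies stack to give $\iota_{2n\ell}$ from $n$ such pairs (Lemma~6), and two conjugates of $\iota_k$ yield any even $g$ once $k\ge\lambda(g)/2$ (Lemma~7). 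Chaining these gives $\lambda_h(g)\le 4\,\lambda(g)/\lambda(h)+4$ directly, which in particular implies your bounded lemma with an explicit constant and renders the separate chopping step unnecessary. To salvage your route you would need an analogue of Lemmas~5 and~7: a uniform construction producing every small $g$, not just a $3$-cycle, from a bounded number of conjugates of $h$.
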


In the rest of this paper we consider only 
the following two kind of groups, 
namely infinite symmetric group $S_\infty$ and 
the infinite alternating group $A_\infty$.
The following claim is important;

\begin{claim}
For any two elements $h,h' \in A_\infty$, 
they are conjugate in $A_\infty$ if and only if
they are conjugate in $S_\infty$.
\end{claim}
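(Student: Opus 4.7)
The plan is to note that one direction ($A_\infty$-conjugacy implies $S_\infty$-conjugacy) is immediate from $A_\infty \subset S_\infty$, and to prove the nontrivial direction by exploiting the defining feature of $A_\infty$: every element has finite support, so there are always infinitely many ``spare'' fixed points available for modifying conjugators. This is precisely what fails in the finite $A_n$, where certain $S_n$-classes (those whose cycle type consists of distinct odd parts) split, and it is why we expect the splitting to disappear in the limit.

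Concretely, suppose $h, h' \in A_\infty$ and $\sigma h \sigma^{-1} = h'$ for some $\sigma \in S_\infty$. If $\sigma$ is already even, we are done. If $\sigma$ is odd, I would choose two natural numbers $i, j$ lying outside the (finite) set $\supp(h)$, and set $\tau = (i\ j)$. The key computation is that $\tau$ commutes with $h$: indeed, $\tau$ fixes every point of $\supp(h)$ (which is invariant under $h$), and $h$ fixes $i$ and $j$, so $\tau h \tau^{-1} = h$. Consequently
$$(\sigma\tau)\, h\, (\sigma\tau)^{-1} = \sigma\, (\tau h \tau^{-1})\, \sigma^{-1} = \sigma h \sigma^{-1} = h'.$$
Since $\sigma$ is odd and $\tau$ is a transposition, $\sigma\tau$ is even; it has finite support because $\supp(\sigma\tau) \subseteq \supp(\sigma) \cup \{i,j\}$. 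Hence $\sigma\tau \in A_\infty$ conjugates $h$ to $h'$.

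There is no real obstacle here; the only subtle point is the verification that $\tau$ commutes with $h$, which requires observing that $\supp(h)$ is $h$-invariant (so that $h(k)$ for $k \in \supp(h)$ is again moved by $h$ and therefore also distinct from $i$ and $j$). This, together with the freedom to pick $i, j$ outside any prescribed finite set, is the only ingredient used, and it is exactly the feature of the infinite-support setting that distinguishes it from $A_n$.
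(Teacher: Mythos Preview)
Your proof is correct and follows essentially the same approach as the paper: multiply the odd conjugator by a transposition supported outside the relevant element's support to obtain an even conjugator. The only cosmetic difference is that the paper multiplies on the left by $(a\ b)$ with $a,b\notin\supp(h')$, whereas you multiply on the right by $(i\ j)$ with $i,j\notin\supp(h)$; both amount to the observation that such a transposition commutes with the corresponding element.
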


\begin{proof}
Suppose $h' = ghg^{-1}$ for an odd permutation $g \in S_\infty$.
Take two distinguished elements $a,b\in\N$ away from the support of $h'$.
Then $h' = ((a\,\,b)g)h((a\,\,b)g)^{-1}$ and $((a\,\,b)g)$ 
is an even permutation.
\end{proof}

We define $\lambda_h (g)$ similarly 
on the infinite symmetric group $S_\infty$
while $\lambda_h (g)=\infty$ if $g\not\in N_h$ 
i.e.\ $g$ is odd and $f$ is even.
Claim 3.\ assures that this definition is an extension of one on $A_\infty$.

Consider a transposition $\iota_1=(1\,\,2) \in S_\infty$.
For a permutation $g \in S_\infty$,
the number $\lambda_{\iota_1} (g)$ is called the 
\textbf{word length} of $g$ and
simply denoted by $\lambda(g)$.
$g$ is an even permutation if and only if 
$\lambda(g)$ is an even number.

Theorem 2 follows from the following evaluation.

\begin{prop}\label{prop}
For any nontrivial elements $g,h\in A_\infty$, the following inequality holds.
$$
\frac{\lambda(g)}{\lambda(h)}
\leqq
\lambda_h(g)
\leqq
4\frac{\lambda(g)}{\lambda(h)}+4.
$$
\end{prop}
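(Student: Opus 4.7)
The plan is to handle the two bounds separately. The lower bound follows at once from general properties of $\lambda$, while the upper bound requires an explicit construction by tiling.

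For the lower bound, I use two standard facts about the word-length function $\lambda$ on $S_\infty$. First, it is subadditive: $\lambda(xy)\leqq\lambda(x)+\lambda(y)$, by concatenating minimal transposition factorizations of $x$ and $y$. Second, it is conjugation-invariant: $\lambda(kxk^{-1})=\lambda(x)$, since $\lambda$ is determined by the cycle type. Combining these, any factorization $g=h_1h_2\cdots h_n$ with $h_i\in[h]$ satisfies
\[
\lambda(g)\leqq\sum_{i=1}^n\lambda(h_i)=n\lambda(h),
\]
so $n\geqq\lambda(g)/\lambda(h)$; taking the minimum yields $\lambda_h(g)\geqq\lambda(g)/\lambda(h)$.

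For the upper bound the strategy is an explicit construction, modelled on the case where $h$ is a single cycle. Let $c$ denote the length of a longest cycle of $h$, so $c-1\leqq\lambda(h)$. In the model case, the identity
\[
(a_1\,a_2\,\ldots\,a_{k(c-1)+1})=(a_1\,\ldots\,a_c)(a_c\,\ldots\,a_{2c-1})(a_{2c-1}\,\ldots\,a_{3c-2})\cdots
\]
writes a long cycle as a chain of $k$ conjugate $c$-cycles sharing single endpoints, yielding $\lambda_h(g)\leqq\lceil\lambda(g)/\lambda(h)\rceil$ in this case (with constants far better than $4$ and $+4$). For general $h$, I would first argue that two suitably chosen conjugates of $h$ have a product with a much simpler cycle type of word length comparable to $\lambda(h)$: concretely, by taking a second conjugate that agrees with $h$ on all cycles except the longest, the cancellation leaves a product of two disjoint $c$-cycles, reducing the analysis to the single-cycle case at the cost of a factor of $2$. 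For a general target $g$, I would decompose $g$ into blocks of word length $\approx\lambda(h)$, each realizable by at most a bounded number of conjugates of $h$ via the cycle-tiling identity above; this contributes another factor of $2$ and a bounded boundary error, so that $\lambda_h(g)\leqq 4\lambda(g)/\lambda(h)+4$.

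The main obstacle is the reduction of an arbitrary nontrivial $h$ to single-cycle form at a uniformly bounded multiplicative cost, especially when $h$ has many short disjoint cycles rather than one long one. One needs to exhibit a universal construction---choosing the two conjugates of $h$ whose product collapses to a prescribed simple cycle structure---without picking up an unbounded overhead depending on the number or arrangement of cycles of $h$. Once this reduction is in hand the rest is essentially bookkeeping: the factor $4$ records the two factor-of-$2$ losses (one on the $h$-side reduction, one on the $g$-side block decomposition), and the additive $+4$ absorbs the at most constantly many leftover transpositions at the boundary of the last incomplete block and in the parity/cycle-type corrections of the tiling.
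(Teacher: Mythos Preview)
Your lower bound is correct and matches the paper (it is exactly the inequality $\lambda_{[f]}([h])\leqq\lambda_{[f]}([g])\lambda_{[g]}([h])$ with $f=\iota_1$).

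The upper bound, however, has a real gap, and you have in fact identified it yourself without closing it. Your concrete reduction step---take a conjugate of $h$ that agrees with $h$ on all cycles except the longest, so that the product collapses to two disjoint $c$-cycles---produces an element of word length $2(c-1)$, where $c$ is the length of the longest cycle of $h$. But $2(c-1)$ is \emph{not} comparable to $\lambda(h)$ in general: if $h=\iota_m=(1\,2)(3\,4)\cdots(2m{-}1\,\,2m)$ then $c=2$ and $2(c-1)=2$, while $\lambda(h)=m$ is arbitrarily large. With this reduction the rest of your tiling argument gives only $\lambda_h(g)\lesssim \lambda(g)/(c-1)$, not $\lambda(g)/\lambda(h)$, so the stated bound does not follow. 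You flag this as ``the main obstacle'' but then simply assert that ``once this reduction is in hand the rest is bookkeeping''; the reduction is precisely what is missing.

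The paper resolves this by going in the opposite direction from a single long cycle: it shows that for \emph{every} nontrivial $h$ one can choose two conjugates whose product is $\iota_{2\ell}$, a product of $2\ell$ disjoint transpositions, with $\ell\geqq\lambda(h)/4$. The point is to treat each cycle of $h$ separately: for a single $(m{+}1)$-cycle an explicit identity (tabulated for $m=1,2,\ldots$) exhibits two conjugate cycles whose product is $\iota_{2k}$ with $k\approx m/3\geqq m/4$; since the cycles of $h$ have disjoint supports, these contributions simply add, yielding $\ell=\sum_i k_i\geqq\sum_i\lambda(h_i)/4=\lambda(h)/4$. After that, $\iota_{2\ell}$'s tile trivially to give any $\iota_{2n\ell}$, and any even $g$ is a product of two conjugates of $\iota_k$ once $k\geqq\lambda(g)/2$. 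The two factors of~$2$ (on the $h$-side and the $g$-side) and the ceiling in choosing $n$ give exactly $4\lambda(g)/\lambda(h)+4$. So the missing idea is: do not collapse $h$ to its longest cycle; instead, convert \emph{every} cycle of $h$ into disjoint transpositions and sum the yields.
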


The left hand side evaluation follows directly from
the inequality (1).

To acquire the upper evaluation, we show the following lemma.
We denote the product of k transposition $(1\,\,2)(3\,\,4)\cdots(2k-1\,\,2k)$
by $\iota_k$.

\begin{lem}\label{a}
For any nontrivial permutation $h$ in the infinite symmetric group $S_\infty$,
there exists an integer $\ell \geq \lambda(h)/4$ such that
$\iota_{2\ell} = (1\,\,2)(3\,\,4)\cdots(4\ell-1\,\,4\ell)$
can be written as a product of two permutations which are conjugate to $h$.
\end{lem}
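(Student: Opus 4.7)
The plan is to prove the lemma by an explicit construction, first reducing to the case of a single cycle and then treating that case.

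For the reduction, I would write $h$ as a disjoint product of cycles of lengths $n_1, \ldots, n_k$ on its support. For each $i$, suppose one can find a pair of $n_i$-cycles $c_i^{(1)}, c_i^{(2)}$ on a block $B_i \subset \N$ dedicated to cycle $i$ (with the blocks mutually disjoint) whose product is a product of $2 \ell^{(n_i)}$ disjoint transpositions on $B_i$, with $\ell^{(n_i)} \geq (n_i - 1)/4$. Then $h_1 = \prod_i c_i^{(1)}$ and $h_2 = \prod_i c_i^{(2)}$ are each conjugate to $h$ (same cycle type), and $h_1 h_2$ is a disjoint union of the per-block products, hence a product of $2\ell$ disjoint transpositions with $\ell = \sum_i \ell^{(n_i)} \geq \sum_i (n_i-1)/4 = \lambda(h)/4$. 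Conjugating $h_1, h_2$ by a single element of $S_\infty$ then puts their product into the standard form $\iota_{2\ell}$.

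For the single-cycle case on an $n$-cycle $c$: when $n \equiv 0 \pmod 4$, I take $c_1 = c$ and $c_2 = c^{n/2-1}$. Since $n/2 - 1$ is odd and coprime to $n/2$, it is coprime to $n$, so $c_2$ is again an $n$-cycle (hence a conjugate of $c$). Their product $c_1 c_2 = c^{n/2}$ is a product of exactly $n/2$ disjoint transpositions, giving $\ell^{(n)} = n/4 \geq (n-1)/4$. For $n = 2$, take $c_1, c_2$ as two disjoint transpositions. For the remaining values of $n$ (odd or $n \equiv 2 \pmod 4$), the power-of-$c$ trick fails; I would construct $c_2$ as an $n$-cycle whose support overlaps $c_1$'s support and extends it by a few additional points from $\N$, chosen so that $c_1 c_2$ has the correct cycle type. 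For instance, $c_1 = (1\,2\,3)$ and $c_2 = (1\,2\,4)$ give $c_1 c_2 = (1\,3)(2\,4)$ for $n = 3$; $c_1 = (1\,2\,3\,4\,5)$ and $c_2 = (2\,1\,3\,4\,5)$ give $c_1 c_2 = (1\,4)(3\,5)$ for $n = 5$; and a similar but more elaborate construction works for $n = 6, 7$, and beyond.

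The main obstacle is showing the single-cycle construction works uniformly for all $n$ not covered by the power trick, consistently producing at least $(n-1)/4$ transpositions. My plan is a case analysis on $n \bmod 4$, supplying in each residue class an explicit template for $c_2$ (whose support extends that of $c_1$ by $O(1)$ new points) that yields roughly $\lceil n/2 \rceil$ transpositions. Verifying that such a template exists and produces a valid conjugate of $c$ for every $n$ in the residue class is the technical core of the argument; it amounts to a cycle-bookkeeping calculation, constrained by the fact that the combined support of $c_1$ and $c_2$ must be large enough to contain $4\ell$ elements moved by the target involution.
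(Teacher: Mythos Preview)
Your reduction to cycles is exactly what the paper does, and your observation that the per-cycle involutions can be placed on disjoint blocks and then assembled is correct (the parity check is automatic: a product of two $n$-cycles is always even, so the number of transpositions it produces is always even).

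The gap is in the single-cycle case, which you yourself flag as the ``technical core''. The power trick $c\cdot c^{n/2-1}=c^{n/2}$ is neat for $n\equiv 0\pmod 4$, but it covers only one residue class, and for the remaining three you give only the examples $n=3,5$ and a promise that ``a similar but more elaborate construction works''. That promise is the entire content of the lemma; without an explicit template you have not proved anything for general $n$. Note also that your power trick is tight: it gives $2\ell=n/2$, i.e.\ $\ell=(n-1+1)/4$, so any template for the other residue classes must be at least this efficient to meet the bound $\ell\geq(n-1)/4$.

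For comparison, the paper does not split on $n\bmod 4$ at all. It writes down, for each word length $m=\lambda(h)$, a single explicit pair of $(m{+}1)$-cycles on overlapping (not identical) supports whose product is $\iota_{2k}$, where $k=\lceil m/3\rceil$; the formula depends only on $m\bmod 3$ and is displayed as a table. This yields $\ell=k\approx m/3$, comfortably above $m/4$, and handles every cycle length uniformly. Your $n=3$ and $n=5$ examples are in fact instances of exactly this overlapping-support idea; pushing them to a general pattern (rather than detouring through powers for one residue class) is the cleaner route.
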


\begin{proof}
The following table shows the lemma holds if $h$ is a cyclic permutation.
Any nontrivial permutation $h$ can be written 
as $h=h_1 \cdots h_n$ where $h_i$ are cyclic permutations and
$\lambda(h) = \lambda(h_1)+\cdots+\lambda(h_n)$, therefore the lemma follows.

\begin{center}
\tiny
\begin{tabular}{c|c|c} 
$\lambda(h)$ &   & $\ell$\\
\hline
$1$ & $(1\,\,2)^{-1} (3\,\,4) = \iota_2$ & $1$ \\
\hline
$2$ & $(1\,\,2\,\,3)^{-1} (1\,\,3\,\,4) = \iota_2$ & $1$ \\
$3$ & $(1\,\,2\,\,3\,\,5)^{-1} (1\,\,3\,\,4\,\,5) = \iota_2$ & $1$ \\
$4$ & $(1\,\,2\,\,3\,\,5\,\,6)^{-1} (1\,\,3\,\,4\,\,5\,\,6) = \iota_2$ & $1$ \\
\hline
$5$ & $(1\,\,2\,\,3\,\,5\,\,6\,\,7)^{-1} (1\,\,3\,\,4\,\,5\,\,7\,\,8) = \iota_4$ & $2$ \\
$6$ & $(1\,\,2\,\,3\,\,5\,\,6\,\,7\,9)^{-1} (1\,\,3\,\,4\,\,5\,\,7\,\,8\,\,9) = \iota_4$ & $2$ \\
$7$ & $(1\,\,2\,\,3\,\,5\,\,6\,\,7\,\,9\,\,10)^{-1} (1\,\,3\,\,4\,\,5\,\,7\,\,8\,\,9\,\,10) = \iota_4$ & $2$ \\
\hline
$8$ & $(1\,\,2\,\,3\,\,5\,\,6\,\,7\,\,9\,\,10\,\,11)^{-1} (1\,\,3\,\,4\,\,5\,\,7\,\,8\,\,9\,\,11\,\,12) = \iota_6$ & $3$ \\
$9$ & $(1\,\,2\,\,3\,\,5\,\,6\,\,7\,\,9\,\,10\,\,11\,\,13)^{-1} (1\,\,3\,\,4\,\,5\,\,7\,\,8\,\,9\,\,11\,\,12\,\,13) = \iota_6$ & $3$ \\
$10$ & $(1\,\,2\,\,3\,\,5\,\,6\,\,7\,\,9\,\,10\,\,11\,\,13\,\,14)^{-1} (1\,\,3\,\,4\,\,5\,\,7\,\,8\,\,9\,\,11\,\,12\,\,13\,\,14) = \iota_6$ & $3$ \\
\hline
$\vdots$ & $\vdots$ & $\vdots$ \\
\hline
$3k-1$ & $(1\,\,2\,\,3\,\,\cdots\,\,4k-3\,\,4k-2\,\,4k-1)^{-1} (1\,\,3\,\,4\,\,\cdots\,\,4k-3\,\,4k-1\,\,4k) = \iota_{2k}$ & $k$ \\
$3k  $ & $(1\,\,2\,\,3\,\,\cdots\,\,4k-3\,\,4k-2\,\,4k-1\,\,4k+1)^{-1} (1\,\,3\,\,4\,\,\cdots\,\,4k-3\,\,4k-1\,\,4k\,\,4k+1) = \iota_{2k}$ & $k$ \\
$3k+1$ & $(1\,\,2\,\,3\,\,\cdots\,\,4k-3\,\,4k-2\,\,4k-1\,\,4k+1\,\,4k+2)^{-1} (1\,\,3\,\,4\,\,\cdots\,\,4k-3\,\,4k-1\,\,4k\,\,4k+1\,\,4k+2) = \iota_{2k}$ & $k$ \\
\hline
$\vdots$ & $\vdots$ & $\vdots$ \\
\end{tabular} 
\end{center}
\end{proof}

\begin{lem}\label{b}
For $k=2n\ell$, 
$\iota_k$ can be written as a product of $n$ permutations 
which are conjugate to $\iota_{2\ell}$.
\end{lem}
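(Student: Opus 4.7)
The plan is to give an explicit decomposition of $\iota_k$ along disjoint blocks. I would partition the support $\{1,\dots,2k\}=\{1,\dots,4n\ell\}$ into $n$ consecutive blocks
$B_j=\{4(j-1)\ell+1,\dots,4j\ell\}$
of size $4\ell$, and for each $j$ define $\sigma_j$ to be the product of the $2\ell$ disjoint transpositions pairing consecutive elements of $B_j$ in the same pattern as $\iota_{2\ell}$. Thus $\sigma_1$ is literally $\iota_{2\ell}$, and each later $\sigma_j$ is just a shifted copy of it living on the next block.

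Then I would check the two facts that make the construction work. First, each $\sigma_j$ is a product of exactly $2\ell$ disjoint transpositions, so it has the same cycle type as $\iota_{2\ell}$ and is therefore conjugate to $\iota_{2\ell}$ in $S_\infty$; since $\sigma_j$ is even (its word length is $2\ell$), Claim~3 promotes this to conjugacy in $A_\infty$ whenever that is what is needed. Second, the blocks $B_j$ are pairwise disjoint, so the $\sigma_j$ commute, and the product $\sigma_1\sigma_2\cdots\sigma_n$ acts as $\sigma_j$ on each $B_j$ and as the identity elsewhere; by construction this is exactly $\iota_k$.

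There is essentially no obstacle: the lemma reduces to bookkeeping about cycle types and disjoint supports, and the only step that requires any comment is the $S_\infty$-to-$A_\infty$ conjugacy upgrade, which is precisely the content of Claim~3. If I wanted to streamline the write-up, I would simply display the formula for $\sigma_j$ and leave the disjointness and cycle-type observations as one-liners.
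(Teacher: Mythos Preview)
Your proposal is correct and is exactly the kind of explicit block decomposition the paper has in mind; the paper's own proof is the single word ``Trivial.'' The remark about upgrading conjugacy via Claim~3 is fine but optional here, since conjugacy in $S_\infty$ (same cycle type) already suffices for the statement and Claim~3 makes the $A_\infty$ version automatic.
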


\begin{proof}
Trivial.
\end{proof}

\begin{lem}\label{c}
For any even permutation $g \in A_\infty$ and
any integer $k\geqq\lambda(g)/2$,
$g$ can be written as a product of two permutations 
which are conjugate to $\iota_k$.
\end{lem}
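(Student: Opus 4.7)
The plan is to write $g$ as a product of two involutions $\sigma_1, \sigma_2$ each consisting of exactly $k$ disjoint transpositions, since these are precisely the permutations conjugate to $\iota_k$ in $S_\infty$. The strategy is first to produce a balanced decomposition with $\lambda(g)/2$ transpositions each on $\supp(g)$, then to pad both sides with extra transpositions on fresh points in $\N$.

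First I would decompose $g$ into disjoint cycles $c_1 \cdots c_r$. For a single cycle $c = (a_1\,a_2\,\cdots\,a_n)$, the two ``reflections'' in the cyclic ordering, namely $\beta(a_j) := a_{n+1-j}$ and $\alpha(a_j) := a_{n+2-j}$ with indices mod $n$, are involutions satisfying $c = \alpha\beta$ and supported in $\{a_1,\ldots,a_n\}$. A quick count shows that when $n$ is odd, both $\alpha$ and $\beta$ consist of $(n-1)/2$ transpositions, while when $n$ is even, one uses $n/2$ and the other $n/2 - 1$. I would then combine these cycle-by-cycle decompositions: since the cycles have pairwise disjoint supports, the $\alpha$-factors commute with the $\beta$-factors of other cycles, so after rearrangement I obtain $g = \sigma_1 \sigma_2$ with both $\sigma_i$ involutions supported in $\supp(g)$.

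The remaining task is to balance the counts. Because $g \in A_\infty$, the number of even-length cycles is even, so I pair them up; within each such pair I use the freedom $c = \alpha\beta = (\alpha\beta\alpha)\,\alpha$, which swaps the roles of the two involutions from that cycle without changing their transposition counts, to cancel the off-by-one imbalances between the two even cycles in the pair. The resulting $\sigma_1, \sigma_2$ each have exactly $\lambda(g)/2$ disjoint transpositions. Finally, I pick $2(k - \lambda(g)/2)$ points of $\N$ outside the finite set $\supp(g)$ and form an involution $\pi$ on them with $k - \lambda(g)/2$ disjoint transpositions. Since $\pi$ has support disjoint from both $\sigma_i$, it commutes with them, and the identity $g = (\sigma_1 \pi)(\pi \sigma_2)$ exhibits $g$ as a product of two involutions each having exactly $k$ disjoint transpositions, as required.

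The main obstacle I anticipate is the parity bookkeeping in the pairing step: ensuring that the off-by-one surpluses and deficits from the even-length cycles can be globally cancelled. This is exactly why the hypothesis $g \in A_\infty$ matters; once one observes that it forces the number of even cycles to be even, the pairing trick works uniformly and the rest of the argument is routine.
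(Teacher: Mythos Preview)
Your argument is correct and follows essentially the same route as the paper: the paper's sublemma decomposes each cycle into a product of two involutions (your reflections $\alpha,\beta$ are precisely the involutions appearing in its explicit formulas, with the same transposition counts), and then pads with transpositions $(a_1\,a_2)\cdots(a_{2r-1}\,a_{2r})$ on points outside $\supp g$, exactly as you do. The only difference is expository: the paper records the order-flexibility for odd-word-length cycles (``$\iota_n\cdot\iota_{n+1}$ or $\iota_{n+1}\cdot\iota_n$'') and then simply asserts the balanced decomposition $g=f_1f_2$, whereas you make the balancing explicit via the conjugation $c=(\alpha\beta\alpha)\alpha$ and the pairing of even-length cycles.
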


\begin{proof}
Since $g$ can be written as a product of cyclic permutations, 
we consider the following sublemma.

\begin{sublem}
Any cyclic permutation with word length $2n$ 
can be written as a product of two permutations 
which are conjugate to $\iota_n$.
Any cyclic permutation with word length $2n+1$ 
can be written as a product of two permutations 
which are conjugate to $\iota_n$ and $\iota_{n+1}$ 
(or $\iota_{n+1}$ and $\iota_n$).
\end{sublem}

\begin{proof}[Proof for the sublemma.]

\begin{equation*}
\begin{split}
&\left( (2\,\,3)(4\,\,5)\cdots(2n\,\,2n+1) \right) \left( (1\,\,2)(3\,\,4)\cdots(2n-1\,\,2n) \right) \\
&\hspace{5cm} = (1\,\,3\,\, \cdots \,\,2n+1\,\,2n\,\,2n-2\,\,\cdots\,\,2),\\
&\left( (2\,\,3)(4\,\,5)\cdots(2n\,\,2n+1) \right) \left( (1\,\,2)(3\,\,4)\cdots(2n+1\,\,2n+2) \right) \\
&\hspace{5cm} = (1\,\,3\,\, \cdots \,\,2n+1\,\,2n+2\,\,2n\,\,\cdots\,\,2),\\
&\left( (1\,\,2)(3\,\,4)\cdots(2n+1\,\,2n+2) \right) \left( (2\,\,3)(4\,\,5)\cdots(2n\,\,2n+1) \right) \\
&\hspace{5cm} = (1\,\,2\,\,4\,\,\cdots \,\,2n+2\,\,2n+1\,\,2n-1\,\,\cdots\,\,3).\\
\end{split}
\end{equation*}
\end{proof}

Because of the sublemma, 
we can write $g=f_1 f_2$ where $f_1$ and $f_2$ are 
conjugate to $\iota_{\lambda(g)/2}$.
Take $a_1,\dots,a_{2r}$ away from $\supp g$, 
$$g= (f_1 (a_1\,\,a_2)\cdots(a_{2r-1}\,\,a_{2r})) 
     ((a_1\,\,a_2)\cdots(a_{2r-1}\,\,a_{2r}) f_2),$$
which shows the lemma \ref{c}.
\end{proof}

Proposition \ref{prop} follows from 
Lemmas \ref{a}, \ref{b} and \ref{c}.
We define the map
$\psi\colon \hat{A_\infty} \to \R_{+}$
by
$\psi([g])=\log \lambda([g])$,
then $\psi$ is quasi-isometric and Theorem \ref{th} is proved.

\begin{prob}
For other non-uniformly simple group $G$,
What is the shape of the metric space
$(\hat{G},d)$?
Is it quasi-isometric to the half-line?
\end{prob}

\par\noindent{\scshape \small
Graduate School of Mathematical Sciences,
University of Tokyo, \\3-8-1 Komaba,
Meguro-ku, Tokyo 153-9814, Japan.}
\par\noindent{\ttfamily kodama@ms.u-tokyo.ac.jp}

\end{document}